\newtheorem{df}{Definition}
\newtheorem{lem}{Lemma}
\newtheorem{teo}{Theorem}
\newtheorem{prop} {Proposition}
\newcommand{\Z}{\mathbb{Z}}
\newcommand{\C}{\mathbb{C}}
\newcommand{\R}{\mathbb{R}}
\newcommand{\nn}{\newline\noindent}
\begin{document}
\title{{The disk property of coverings of 1-convex surfaces}
\thanks{\noindent
Mathematics Subject Classification (2000): 32F, 32E.
\newline\indent
Key words: Stein spaces, 1-convex spaces, holomorphically convex spaces, discrete disk property. }
\author{M. Col\c toiu and C. Joi\c ta }}

\date{}
\maketitle

\begin{abstract}
Let $X$ be an 1-convex surface and $p:\tilde X\to X$ an (unbranched) covering map. We prove that
if $\tilde X$ does not contain an infinite Nori string of rational curves then $\tilde X$ satisfies the 
discrete disk property.
\end{abstract}
\section{Introduction}

Let $X$ be a 1-convex surface, i.e. a two dimensional complex manifold which is strongly pseudoconvex. 
Then $X$ is a proper 
modification of a 2-dimensional normal Stein space at a finite set of points. Let  $p:\tilde X\to X$ 
be an 
(unbranched) covering map. We are interested in this note to study the convexity properties 
of $\tilde X$.

It was remarked in \cite{Co} that in general $\tilde X$ is not weakly 1-complete (i.e. it might be possible 
that there is no plurisubharmonic exhaustion function on $\tilde X$). This is due to the fact that 
$\tilde X$ might contain an infinite Nori string (necklace), that is a 1-dimensional connected complex 
subspace which is non-compact and has infinitely many compact irreducible components. 
However the main result in  \cite{Co} shows that $\tilde X$ can be exhausted by a sequence of relatively
compact strongly pseudoconvex domains with smooth boundary ($\tilde X$ is $p_3$-convex in
the terminology of \cite{DG}). In particular $\tilde X$ satisfies the continuous Kontinuit\"atssatz (the
continuous disk property).

In this paper we investigate the discrete disk property for $\tilde X$. The main result can be stated as 
follows (see  Theorem \ref{main}): \textit{If $\tilde X$ does not contain an infinite Nori string
of rational curves then $\tilde X$ satisfies the discrete disk property}. 

It should be noted that the discrete disk property is a much stronger condition than the continuous one.

\section{Preliminaries}

All complex spaces are assumed of bounded dimension and countable at infinity.

Let $X$ be a complex manifold. We recall that $X$ is said to be 1-convex if there exists a 
${\mathcal C}^\infty$ function $\phi:X\to \R$ such that:
\nn
a) $\phi$ is an exhaustion function (i.e. $\{\phi<c\}\subset\subset X$ for every $c\in\R$),
\nn
b) $\phi$ is strongly plurisubharmonic (its Levi form is positive definite) 
outside a compact subset of $X$.
\nn
It is known (see \cite{Na}) that this is equivalent to the following condition: there exists a proper 
surjective holomorphic map $\pi:X\to Y$ where $Y$ is a normal Stein space with finitely many singular 
points
and there exists a finite subset of $Y$, $B$, such that $\pi$ induces a biholomorphism from 
$X\setminus\pi^{-1}(B)$ to $Y\setminus B$. $\pi^{-1}(B)$ is called the exceptional set of $X$.

A complex space $X$ is called holomorphically convex if for every discrete sequence $\{x_n\}$ in $X$ 
there exists a holomorphic 
function $f:X\to\C$ such that $\lim_{n\to\infty}|f(x)|=\infty$.

We denote by $\Delta$ the unit disk in $\C$,  $\Delta=\{z\in\C: |z|< 1\}$ and for $\epsilon>0$
by $\Delta_{1+\epsilon}$ the disk $\Delta_{1+\epsilon}:=\{z\in\C:|z|<1+\epsilon\}$.

\begin{df}
Suppose that $X$ is a complex space. We say that $X$ satisfies the discrete disk property if
whenever  $f_n:U\to X$ is a sequence of holomorphic functions defined on an open neighborhood 
$U$ of $\overline \Delta$ for which there exists an $\epsilon>0$ and a continuous function 
$\gamma:S^1=\{z\in\C: |z|=1\}\to X$ such that $\Delta_{1+\epsilon}\subset U$, 
$\bigcup_{n\geq 1} f_n(\Delta_{1+\epsilon}\setminus \Delta)$ is 
relatively compact in $X$ and ${f_n}_{|S^1}$ converges uniformly to $\gamma$ we have that 
$\bigcup_{n\geq 1}f_n(\overline \Delta)$ is relatively compact in $X$.
\end{df}

This definition of the disk property using coronae instead of the boundary $\partial \Delta$ of the unit
disk is natural as one sees looking at the example $f_n:\C\to\C$, $f_n(z)=z^n$ which is a sequence that
converges to 0 if $|z|<1$ and diverges otherwise.

For $\epsilon>0$ we define $H_\epsilon\subset \C\times\R$ as $$H_\epsilon=
\Delta_{1+\epsilon}\times[0,1)\bigcup \{z\in\C:1-\epsilon<|z|<1+\epsilon\}\times\{1\}.$$

Having in mind the definition of the continuity principle (see for example \cite{GF}, page 47)
we introduce the following:

\begin{df}
A complex space $X$ is said to satisfy the continuous disk property if whenever $\epsilon$ is a positive number
and $F:H_{\epsilon}\to X$ is a continuous function such that, for every $t\in[0,1)$, 
$F_t:\Delta_{1+\epsilon}\to X$, $F_t(Z)=F(z,t)$, is holomorphic we have that $F(H_{\epsilon_1})$
is relatively compact in $X$ for any $0<\epsilon_1<\epsilon$.
\end{df}

Clearly the discrete  disk property implies the continuous one but as it is shown by the example
of Fornaess (see \cite{Fo}) of an increasing union of Stein open subsets that does not satisfy
the discrete disk property, the discrete disk property is a much stronger condition. 
(It is not difficult to see that an increasing union of open subsets of a complex space $X$, each one 
of them satisfying the continuous disk property, satisfies the continuous disk property. The proof is 
pretty much the same as the proof of Theorem 3.1, page 60 in \cite{GF}.)

 The following theorem was proved in \cite{CD} and \cite{Nap}.
\begin{teo}\label{CDhol}
Let $\pi: X\to  T$  be a proper holomorphic surjective map of complex
spaces, let $t_0 \in  T$ be any point, and denote by $X_{t_0}:=\pi^{-1}(t_0)$ 
the fiber of $\pi$
at $t_0$. Assume that $\dim X_{t_0}=1$. Let $\sigma:\tilde X\to X$ be a covering space and let 
$\tilde X_{t_0}=\sigma^{-1}(X_{t_0})$. If $\tilde X_{t_0}$
 is holomorphically convex, then there exists an open neighborhood $\Omega$ of $t_0$
 such that   $(\pi\circ\sigma)^{-1}(\Omega)$
is holomorphically convex.
\end{teo}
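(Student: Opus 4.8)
The plan is to reduce, by passing to Remmert reductions and shrinking the base, to a situation in which holomorphic convexity of $(\pi\circ\sigma)^{-1}(\Omega)$ is witnessed by a single proper holomorphic map to a Stein space: to the structure maps of the family (pulled up to $\tilde X$) one adjoins the ``fibrewise'' proper map furnished by the hypothesis on $\tilde X_{t_0}$.

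First I would shrink $T$ to a Stein open neighbourhood of $t_0$ — so that $X:=\pi^{-1}(T)$ becomes holomorphically convex and, after one more shrinking, all fibres of $\pi$ are at most $1$-dimensional — and pass to its Remmert reduction $R\colon X\to X'$, with $X'$ Stein; since $T$ is Stein, $\pi$ factors as $\pi=\pi'\circ R$ with $\pi'\colon X'\to T$ holomorphic. Because $X_{t_0}$ is compact, $R(X_{t_0})=\pi'^{-1}(t_0)$ is a finite set. Fix a proper holomorphic embedding of $X'$ into some $\C^M$ whose first coordinates are the components of $\pi'$, and put $h:=(\text{this embedding})\circ R\circ\sigma\colon\tilde X\to\C^M$; it factors through $R\circ\sigma$, takes values in $X'\subset\C^M$, but is not proper because $\sigma$ is not — its fibres over a point of $X'$ are the $\sigma$-preimages of the compact fibres of $R$, which fail to be compact as soon as the covering is infinite. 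It thus remains to manufacture, on a neighbourhood of $\tilde X_{t_0}$ in $\tilde X$, finitely many further holomorphic functions restoring properness along these fibres.

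These come from the hypothesis. Since $\tilde X_{t_0}$ is holomorphically convex and $1$-dimensional, its Remmert reduction is a Stein space of dimension $\le 1$; composing the reduction map with a proper embedding of that space into $\C^3$ yields a proper holomorphic map $g_0\colon\tilde X_{t_0}\to\C^3$ which is constant on each compact curve of $\tilde X_{t_0}$ and separates points off those curves. The core step is to extend $g_0$ to a holomorphic map $g\colon(\pi\circ\sigma)^{-1}(\Omega)\to\C^3$ for $\Omega$ small; for this I would use that $\tilde X_{t_0}$ has a neighbourhood basis in $\tilde X$ of the form $\sigma^{-1}(R^{-1}(W'_\nu))$, with $W'_\nu$ shrinking to $R(X_{t_0})$, together with a vanishing statement for the first cohomology of the ideal sheaf of $\tilde X_{t_0}$ in such neighbourhoods — which ought to follow from the $1$-dimensionality of the fibre and from the holomorphic convexity of $\tilde X_{t_0}$, the latter being exactly what prevents the cohomology from propagating along the covering. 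Granting the extension, after one further shrinking of $\Omega$ the map $(g,h)\colon(\pi\circ\sigma)^{-1}(\Omega)\to\C^3\times\C^M$ should be proper onto its image $Z$: properness along a fibre $\tilde X_t$ persists for $t$ near $t_0$ because that fibre is a small deformation of $\tilde X_{t_0}$ on which $g$ is uniformly close to the proper map $g_0$. Then $Z$ is a closed analytic subset of $\C^{3+M}$, hence Stein, and $(\pi\circ\sigma)^{-1}(\Omega)$ maps properly and surjectively onto $Z$, so it is holomorphically convex.

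\emph{The main obstacle} is precisely the extension of $g_0$, together with the accompanying stability of the configuration of compact curves and the uniformity of the estimates over the (infinitely many) sheets of the covering: one must rule out that, arbitrarily close to $t_0$, compact curves of the exceptional set of $\pi$ organise themselves in $\tilde X$ into an infinite Nori string not seen by $\tilde X_{t_0}$, and one must keep the cohomology of the ideal sheaf under control as $\Omega\downarrow\{t_0\}$. This is where the hypotheses are indispensable: the compact curves over $t$ near $t_0$ are deformations over $T$ of curves in the exceptional set of $X_{t_0}$, hence lift to the locally finite, non-chaining family of compact curves inside the holomorphically convex space $\tilde X_{t_0}$, and this rigidity must be leveraged both to carry out the extension and to guarantee that $(\pi\circ\sigma)^{-1}(\Omega)$ inherits, rather than loses, holomorphic convexity.
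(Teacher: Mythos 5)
First, a caveat about the comparison you asked for: the paper does not prove Theorem \ref{CDhol} at all; it is imported from \cite{CD} and \cite{Nap}, so there is no in-paper argument to measure yours against. Judged on its own terms, your sketch has a genuine gap at exactly the point you yourself flag as ``the main obstacle'', and that point is not a technical loose end: it is the entire content of the theorem.

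Concretely, the reductions in your first paragraph (Stein shrinking of $T$, Remmert reduction $R\colon X\to X'$, the non-proper map $h$) are fine and essentially free. Everything then hinges on (i) extending the proper map $g_0\colon\tilde X_{t_0}\to\C^3$ to a holomorphic map $g$ on $(\pi\circ\sigma)^{-1}(\Omega)$, and (ii) the properness of $(g,h)$. For (i) you invoke vanishing of $H^1$ of the ideal sheaf of $\tilde X_{t_0}$ on neighbourhoods of the form $\sigma^{-1}(R^{-1}(W'_\nu))$ and say it ``ought to follow'' from $\dim\tilde X_{t_0}=1$ and holomorphic convexity of $\tilde X_{t_0}$. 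No standard theorem gives this: these neighbourhoods are infinite coverings, hence neither relatively compact nor, a priori, holomorphically convex --- their holomorphic convexity is precisely what is being proved --- so neither Cartan B nor any finiteness theorem applies, and cohomological control on infinite coverings of $1$-convex spaces is exactly the kind of statement that fails in general (its failure, via infinite Nori strings, is the reason the hypothesis on $\tilde X_{t_0}$ is needed at all). As stated, the step is close to circular. For (ii), ``$g$ is uniformly close to $g_0$ on a small deformation of the fibre'' is a compact-base argument, but $\tilde X_{t_0}$ is non-compact; to get properness of $(g,h)$ on a single $\Omega$ you would need uniform estimates over the infinitely many sheets, which nothing in the sketch provides.

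For orientation: the published proofs do not proceed by extending a proper map off the central fibre. In outline, they analyse the maximal compact analytic subset of $X$ over a small $\Omega$, show that its preimage in $\tilde X$ decomposes into compact connected components (this is where the absence of infinite Nori strings in $\tilde X_{t_0}$ enters, together with a stability statement for the compact curves in nearby fibres), blow these components down simultaneously, and obtain holomorphic convexity from plurisubharmonic exhaustion functions in the spirit of Proposition \ref{CMH}. If you wish to salvage your route, you would have to prove the cohomological extension statement in (i) from scratch, and at that point you would essentially be reproving the theorem by harder means.
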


The next result was proved in \cite{Co}.

\begin{prop}\label{CMH}
Let $X$ be an 1-convex manifold with exceptional set set $S$ and $p:\tilde X\to X$ any covering. 
Then there exists
a strongly plurisubharmonic function $\tilde\phi:\tilde X \to [-\infty,\infty)$ such that 
$p^{-1}(S)=\{\tilde \phi=-\infty\}$
and for any open neighborhood $U$ of $S$, the restriction $\tilde\phi_{|\tilde X\setminus p^{-1}(U)}$ 
is an exhaustion function
on $\tilde X\setminus p^{-1}(U)$.
\end{prop}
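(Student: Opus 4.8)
The plan is to first solve the problem downstairs on $X$, then pull back and correct; all the difficulty lies in the correction, because the covering map is not proper.

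\emph{The downstairs model.} Fix a proper surjective holomorphic map $\pi\colon X\to Y$ onto a normal Stein space with $B:=\pi(S)=\{y_1,\dots,y_k\}$ finite, and a proper holomorphic embedding $Y\hookrightarrow\C^N$ with ambient coordinates $z$. Put
$$h:=\sum_{j=1}^{k}\log\|z-y_j\|^{2}+\|z\|^{2},\qquad\text{restricted to }Y.$$
Then $h\colon Y\to[-\infty,\infty)$ is plurisubharmonic, strongly plurisubharmonic on $Y\setminus B$, satisfies $\{h=-\infty\}=B$, and is an exhaustion of $Y$ (every summand tends to $+\infty$ at infinity of $Y$). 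Hence $\Phi:=\pi^{*}h$ is plurisubharmonic on $X$, strongly plurisubharmonic on $X\setminus S$, has $\{\Phi=-\infty\}=S$, and — $\pi$ being proper — is an exhaustion of $X$; in particular $\Phi|_{X\setminus U}$ exhausts $X\setminus U$ for every open $U\supseteq S$.

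\emph{Reduction to a correction term.} The pullback $p^{*}\Phi$ already has every property demanded of $\tilde\phi$ except the exhaustion property on $\tilde X\setminus p^{-1}(U)$, and it fails that only because $p$ is not proper: $p^{*}\Phi$ does not grow along the (possibly infinite) fibres of $p$ over a fixed compact part of $X$. So it suffices to produce a \emph{finite-valued} plurisubharmonic function $\rho\colon\tilde X\to\R$ which restricts to an exhaustion of $\tilde X\setminus p^{-1}(U)$ for every open $U\supseteq S$; then $\tilde\phi:=p^{*}\Phi+\rho$ works. Indeed $\tilde\phi$ is plurisubharmonic on $\tilde X$ and strongly plurisubharmonic on $\tilde X\setminus p^{-1}(S)$, one has $\{\tilde\phi=-\infty\}=\{p^{*}\Phi=-\infty\}=p^{-1}(S)$ because $\rho$ is finite, and on $\tilde X\setminus p^{-1}(U)$ the bound $p^{*}\Phi\ge\inf_{X\setminus U}\Phi>-\infty$ forces $\{\tilde\phi\le c\}\cap(\tilde X\setminus p^{-1}(U))$ to lie inside a compact sublevel set of $\rho$.

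\emph{Building $\rho$.} Exhaust $X$ by the relatively compact, strongly pseudoconvex, $1$-convex domains $X_m:=\{\Phi<m\}$ (all with exceptional set $S$), construct $\rho$ on $\tilde X_m:=p^{-1}(X_m)$, and patch over $m$. On $\tilde X_m$ the only obstruction to a plurisubharmonic exhaustion is the escape of covering sheets over the compact part of $X_m$ lying outside a small fixed neighbourhood of $S$. Near each exceptional fibre $S_j$ one argues separately: when $p^{-1}(S_j)$ is holomorphically convex, Theorem~\ref{CDhol} (applied to $\pi$ at $y_j$ and the covering $p$) gives a holomorphically convex — hence weakly $1$-complete — neighbourhood of $p^{-1}(S_j)$, carrying a plurisubharmonic exhaustion; when $p^{-1}(S_j)$ contains an infinite Nori string, one instead only needs a plurisubharmonic function blowing up along the pseudoconcave hypersurface bounding a fixed small neighbourhood of $p^{-1}(S)$, the string being harmlessly contained in that neighbourhood. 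Over the remaining relatively compact region — a covering of a piece of $X$ bounded away from $S$ — the required plurisubharmonic function is furnished by the techniques for coverings of strongly pseudoconvex manifolds (Grauert‑type bumping together with $L^{2}$/sheaf estimates over the compact base, the same circle of ideas that yields $p_3$-convexity of $\tilde X$). Gluing these pieces by regularized maxima and then patching across $m$ in Behnke--Stein fashion — with a convex reparametrisation and a careful choice of additive constants keeping the sublevel sets compact outside each $p^{-1}(U)$ — produces $\rho$.

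\emph{The main obstacle.} The genuine difficulty is this last step. Because $p$ is not proper, nothing pulled back from $X$ can control $\tilde X$ outside a neighbourhood of $p^{-1}(S)$, so one must manufacture on $\tilde X$ itself a plurisubharmonic function that escapes along the covering sheets over every compact part of $X\setminus(\text{neighbourhood of }S)$ — and do so while keeping the final $\tilde\phi$ identically $-\infty$ on $p^{-1}(S)$, so that any Nori strings are absorbed into the $-\infty$-locus rather than obstructing the construction. This is exactly the analytic heart of the matter, and the point where the $1$-convexity of $X$ (not merely local pseudoconvexity) enters decisively.
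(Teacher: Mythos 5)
First, a remark on the comparison itself: the paper does not prove Proposition~\ref{CMH} --- it is quoted from \cite{Co} --- so there is no in-paper argument to match; your proposal has to stand on its own, and it does not.

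Your first two steps are correct but carry essentially no weight. The function $h$ on $Y$ and its pullback $\Phi=\pi^{*}h$ are standard, and the observation that it suffices to add a finite-valued plurisubharmonic $\rho$ on $\tilde X$ exhausting $\tilde X\setminus p^{-1}(U)$ for every neighbourhood $U$ of $S$ is sound. But such a $\rho$ is, up to the harmless summand $p^{*}\Phi$, exactly the function the proposition asks for: the ``reduction'' reduces the statement to itself. The paragraph ``Building $\rho$'' then does not construct anything. It names techniques (bumping, $L^{2}$ estimates, regularized maxima, Behnke--Stein patching) without performing them, and the two concrete tools it invokes cannot carry the argument: Theorem~\ref{CDhol} requires $p^{-1}(S_j)$ to be holomorphically convex, which fails precisely when it contains an infinite Nori string, whereas the proposition is asserted for \emph{arbitrary} coverings with no Nori-string hypothesis (indeed that generality is why the paper can use it in all three cases of Theorem~\ref{main}); and the appeal to ``the same circle of ideas that yields $p_3$-convexity of $\tilde X$'' is circular, since in \cite{Co} the $p_3$-convexity of $\tilde X$ is the main theorem and is deduced \emph{from} this very proposition. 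Your closing paragraph concedes that the last step is ``the analytic heart of the matter''; that heart is missing.

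What is missing concretely is a mechanism producing plurisubharmonic growth transverse to the infinite, non-proper fibres of $p$ over the part of $X$ away from $S$. The standard source of such growth, and the one underlying \cite{Co}, is Stein's theorem that an unbranched covering of a Stein space is Stein: choosing finitely many $f_i\in\mathcal{O}(Y)$ with common zero set $B=\pi(S)$, the sets $X_i=\pi^{-1}(\{f_i\neq 0\})$ are Stein and cover $X\setminus S$, so each $p^{-1}(X_i)$ is Stein and carries a strictly plurisubharmonic exhaustion; the actual work is then patching these exhaustions with $p^{*}\Phi$ across the hypersurfaces $\{f_i\circ\pi\circ p=0\}$, which requires controlling the growth of the chosen exhaustions near those hypersurfaces. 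No ingredient of this kind --- nor any substitute for it --- appears in your proposal, so as written it establishes only the (easy) properties of $p^{*}\Phi$ and not the exhaustion property that is the content of the proposition.
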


We recall that a topological space $X$ is called an ENR (Euclidean Neighborhood Retract) if it is 
homeomorphic to a closed subset $X_1$ of $\R^n$ for some $n$ such that there is an open 
neighborhood $V$ of $X_1$ and a continuous retract $\rho:V\to X_1$.  For basic results regarding
ENR's we are referring to \cite{Do}.   It is proved there that a locally compact and locally contractible 
subset of $\R^n$ is and ENR and that if a Hausdorff topological space $X$ is covered by countable family of 
locally compact open
subsets each one of them homeomorphic with a subset of fixed Euclidean space then $X$ is an ENR. In
particular every complex space of bounded dimension is an ENR.

Lemma \ref{CTlema} was proved in \cite{CT}.

\begin{lem}\label{CTlema}
If $X$ is an ENR  and $\{\gamma_n\}_{n\geq 1}$, $\gamma_n:S^1:\to X$, 
is a sequence of null-homotopic loops converging uniformly to $\gamma:S^1\to X$
then $\gamma$ is null-homotopic. Moreover given a covering $p:\tilde X\to X$ there exists liftings 
$\tilde\gamma_n$ for $\gamma_n$ and $\tilde\gamma$ for $\gamma$ such that $\tilde\gamma_n$ converges
uniformly to  $\tilde \gamma$ 
\end{lem}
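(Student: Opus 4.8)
The plan is to exploit the ENR structure of $X$: realize $X$ as a closed subset $X_1\subset\R^n$ with a retraction $\rho:V\to X_1$ from an open neighbourhood $V$ of $X_1$, and use the standard fact that two uniformly close maps into an ENR are (freely) homotopic. Concretely, since $\gamma(S^1)$ is compact and contained in the open set $V$, I would fix $\delta>0$ so that the $\delta$-neighbourhood of $\gamma(S^1)$ in $\R^n$ lies in $V$; then for every $n$ with $\sup_{s\in S^1}\|\gamma_n(s)-\gamma(s)\|<\delta$ the formula
$$H_n(s,t)=\rho\bigl((1-t)\gamma(s)+t\gamma_n(s)\bigr),\qquad (s,t)\in S^1\times[0,1],$$
gives a continuous homotopy $H_n:S^1\times[0,1]\to X_1\cong X$ from $\gamma$ to $\gamma_n$ (using $\rho|_{X_1}=\mathrm{id}$). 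Since each $\gamma_n$ is null-homotopic, this yields the first assertion. I would also record, using uniform continuity of $\rho$ on a fixed compact neighbourhood of $\gamma(S^1)$ inside $V$, that $H_n\to\gamma\circ\mathrm{pr}$ uniformly on $S^1\times[0,1]$ (here $\mathrm{pr}$ is the projection onto $S^1$); this uniform smallness of the $H_n$ will drive the second part.

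For the lifts, I would fix a base point $z_0\in S^1$ and $\tilde x\in p^{-1}(\gamma(z_0))$, and let $\tilde\gamma$ be the lift of $\gamma$ with $\tilde\gamma(z_0)=\tilde x$ — a genuine loop because $\gamma$ is null-homotopic. For all large $n$, apply the homotopy lifting property of $p$ to $H_n$ with initial lift $\tilde\gamma$, obtaining $\tilde H_n:S^1\times[0,1]\to\tilde X$ with $\tilde H_n(\cdot,0)=\tilde\gamma$ and $p\circ\tilde H_n=H_n$, and set $\tilde\gamma_n:=\tilde H_n(\cdot,1)$. This is automatically a continuous loop on $S^1$ with $p\circ\tilde\gamma_n=\gamma_n$, hence a lift of $\gamma_n$; for the finitely many remaining $n$ I would pick any lift of $\gamma_n$ to a loop (possible since $\gamma_n$ is null-homotopic), which is irrelevant for uniform convergence.

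The remaining and, I expect, most delicate step is to show $\tilde\gamma_n\to\tilde\gamma$ uniformly. Here I would cover $\gamma(S^1)$ by finitely many evenly covered open sets and use the Lebesgue number lemma to subdivide $S^1$ into arcs $A_1,\dots,A_k$, $A_j=[t_{j-1},t_j]$, with $\gamma(A_j)$ sitting with positive margin inside an evenly covered open set $U_j$. For large $n$ the uniform estimate on $H_n$ forces $H_n(A_j\times[0,1])\subset U_j$ for all $j$; as $A_j\times[0,1]$ is connected, $\tilde H_n(A_j\times[0,1])$ lies in a single sheet over $U_j$ — the one containing $\tilde\gamma(t_{j-1})$, which is also the sheet $\tilde U_j$ containing the connected set $\tilde\gamma(A_j)$, so $\tilde U_j$ is independent of $n$. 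Consequently $\tilde\gamma|_{A_j}=(p|_{\tilde U_j})^{-1}\circ\gamma|_{A_j}$ and $\tilde\gamma_n|_{A_j}=(p|_{\tilde U_j})^{-1}\circ\gamma_n|_{A_j}$, and since $\gamma_n\to\gamma$ uniformly on $A_j$ with images eventually inside a fixed compact subset of $U_j$ on which $(p|_{\tilde U_j})^{-1}$ is uniformly continuous, convergence is uniform on $A_j$, hence on $S^1$. The main obstacle is precisely this sheet bookkeeping: one must make sure a single sheet $\tilde U_j$ works simultaneously for $\tilde\gamma$ and all large $n$, so that closeness propagates across all $k$ subdivision points at once; one also uses that $\tilde X$ is metrizable (being a complex space), so that uniform convergence there makes sense, the relevant images lying anyway in a fixed compact set.
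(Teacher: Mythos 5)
The paper does not prove this lemma itself; it only cites \cite{CT}, so there is no in-text proof to compare against. Your argument is the standard one for such statements about ENRs and is correct: the retraction $\rho$ turns the straight-line homotopy between the uniformly close maps $\gamma$ and $\gamma_n$ into a genuine free homotopy in $X$ (which suffices for null-homotopy, since the trivial class is a full conjugacy class in $\pi_1$), and the homotopy lifting property plus the sheet bookkeeping over a Lebesgue-number subdivision gives the uniform convergence of the lifts. This is essentially the mechanism behind the cited result in \cite{CT}, so I would count it as the same approach; the only point worth flagging is that uniform convergence in $\tilde X$ requires fixing a metric there, which you correctly note is available in the situations where the lemma is applied.
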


This lemma can be applied in particular if $X$ is a complex space (by the above discussion).

\begin{df}
Let $L$ be a connected 1-dimensional complex space and $\cup L_i$ be its decomposition into 
irreducible components. $L$ is called an infinite Nori string if all $L_i$ are compact and $L$ is 
not compact
\end{df}

It is clear that a 1-dimensional complex space is holomorphically convex if and only if it does 
not contain an infinite Nori string.

We recall that a compact complex curve is called rational if its normalization is ${\mathbb P}^1$.
\section{The Results}

\begin{lem}\label{holc1}
If $L$ is 1-dimensional complex space that does not contain an infinite Nori string of rational
curves then $L$ has a holomorphically convex covering space.
\end{lem}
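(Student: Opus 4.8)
The plan is to construct the holomorphically convex covering by building a covering of $L$ that "kills" exactly the obstruction to holomorphic convexity, namely the compact irreducible components that can chain together into infinite Nori strings. The key observation is that a 1-dimensional complex space $L$ fails to be holomorphically convex precisely when it contains an infinite Nori string (as noted in the excerpt), so I want a covering $\tilde L \to L$ in which no infinite Nori string survives. The rational hypothesis is what makes this possible: a compact rational curve has $\mathbb{P}^1$ as normalization, which is simply connected, so the covering restricted to (the normalization of) each rational component is trivial — each such component lifts as a disjoint union of copies of itself. This means rational components cannot be "cut" by a covering, but non-rational compact components (positive genus, so $\pi_1 \ne 1$) can be unwound. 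The strategy is therefore to pass to a covering that unwinds the non-rational part enough that the remaining compact components form only finite strings.

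First I would reduce to the case where $L$ is connected and carry out the normalization: let $\nu: L' \to L$ be the normalization, which is a local isomorphism away from a discrete set, so a covering of $L$ pulls back to a covering of $L'$ and holomorphic convexity of a covering of $L'$ is easily related to that of the corresponding covering of $L$ (normalization being a finite map). Next I would analyze the structure of $L$: its compact irreducible components $\{L_i\}$, the rational ones among them, and the incidence graph $\Gamma$ whose vertices are the $L_i$ and whose edges record intersection points, with also the non-compact components attached. An infinite Nori string corresponds to an infinite connected subgraph consisting of compact vertices. The goal is a covering $\tilde L \to L$ under which the preimage graph $\tilde\Gamma$ has no infinite connected all-compact subgraph. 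For this I would take the covering corresponding to the subgroup of $\pi_1(L)$ generated by the images of $\pi_1$ of all the compact components together with $\pi_1$ of small neighborhoods — more precisely, I would use the maximal covering that is trivial over (a neighborhood of) the union of all compact components. Because each rational component is simply connected, being trivial over the rational components is automatic; the content is making the covering trivial over each positive-genus compact component, and over a regular neighborhood $W$ of $C := \bigcup(\text{compact components})$. Over such a covering, the preimage of $W$ is a disjoint union of copies of $W$, hence $\tilde L$ contains no infinite Nori string: any string in $\tilde L$ maps to a string in $L$, which lies in $W$, and is therefore contained in a single sheet, i.e. in a copy of $W$; but $W$ retracts onto $C$ which has only finitely many components, so the string is finite and thus compact — contradiction. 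Therefore $\tilde L$ is holomorphically convex.

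The main obstacle I anticipate is the existence and good behavior of a covering that is "trivial over a neighborhood of the compact set $C$" while $L$ itself may be non-compact with complicated topology at infinity, and moreover $C$ need not be connected, so one must be careful that the subgroup $H \le \pi_1(L)$ generated by $\iota_*\pi_1(W_\alpha)$ over the connected components $W_\alpha$ of $W$ genuinely yields a covering whose restriction over each $W_\alpha$ splits. Using that $\pi_1(W_\alpha) = \pi_1(C_\alpha)$ for a compact complex curve $C_\alpha$, and that $\pi_1(C_\alpha)$ is finitely generated, one gets that $H$ is a reasonable subgroup; the covering associated to $H$ restricts over each $W_\alpha$ to a covering whose monodromy factors through $\pi_1(W_\alpha) \to \pi_1(L)/\!\!\sim$ and is trivial by construction. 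A secondary point to handle cleanly is the singularities of $L$: at a normal crossing (or worse) of two compact components the local fundamental group is nontrivial, so "neighborhood of $C$" must mean a genuine regular neighborhood whose $\pi_1$ surjects onto the relevant part, and one uses that $L$ is locally contractible / an ENR (as recalled in the excerpt) to build such neighborhoods and retracts. Once these topological technicalities are in place, the holomorphic convexity conclusion is immediate from the stated equivalence (no infinite Nori string $\Leftrightarrow$ holomorphically convex for $1$-dimensional spaces).
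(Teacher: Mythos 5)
Your opening paragraph states the right strategy (unwind the non-rational compact components so that only rational curves stay compact upstairs, then invoke the hypothesis), but the covering you actually construct does the opposite of this and fails. A covering of $L$ that is trivial over a neighborhood $W$ of the union $C$ of \emph{all} compact irreducible components lifts $C$ homeomorphically on each sheet. The hypothesis only excludes infinite Nori strings of \emph{rational} curves, so $L$ is allowed to contain, say, an infinite connected chain of elliptic curves; that chain sits inside $C$, is reproduced homeomorphically in every sheet of the preimage of $W$, and so every covering of this kind still contains an infinite Nori string and is not holomorphically convex. The precise point where your argument breaks is the claim that a string contained in one sheet ``retracts onto $C$ which has only finitely many components, so the string is finite'': $C$ need not have finitely many connected components, and, more importantly, a single connected component of $C$ can itself be non-compact with infinitely many compact irreducible components --- that is exactly an infinite Nori string, and exactly the case the lemma has to deal with. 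Assuming each component of $C$ is compact begs the question. So ``make the covering trivial over the positive-genus components'' should be ``make the covering as non-trivial as possible over the positive-genus components''.

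For contrast, the paper's construction takes, for each compact non-rational irreducible component $L_i$, its universal cover $\tilde L_i\to L_i$, which is a non-compact Stein curve precisely because $L_i$ is not rational; it then glues these by matching the ($\Z$-indexed) fibers over the pairwise intersection points so that the result is still a covering, and finally attaches infinitely many disjoint copies of the union $L''$ of the remaining components (the non-compact ones and the compact rational ones). In that covering the only compact irreducible components left are lifts of rational curves, each copy of $L''$ is embedded and contains no infinite Nori string of rational curves by hypothesis, and distinct copies are joined only through non-compact Stein pieces, so no infinite Nori string survives. Your observation that ${\mathbb P}^1$ is simply connected and hence rational components cannot be unwound is correct and explains why the rationality hypothesis is needed, but it should lead you to unwind everything else, not to trivialize the covering over everything.
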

\begin{proof} 
\textbf{Step 1.} We assume that $L$ is connected and all its irreducible components are compact not rational.
Let $L=\bigcup_{i\in I}L_i$ be the decomposition of $L$ into irreducible components and
$A=\{x\in L:\exists i_1,i_2\in I,\ i_1\neq i_2, \text{ such that }x\in L_{i_1}\cap L_{i_2}\}$.
Let $p_i:\tilde L_i\to L_i$ be the universal cover of $L_i$. Since $L_i$ is not rational 
for all $i\in I$ we have that $\tilde L_i$ is Stein. 
For each $x\in A\cap L_i$, we choose a bijection $\tau(x,i): \Z\to p_i^{-1}(x)$ and we write
$\tau(x,i)(k)=(k,x,i)$.

On the disjoint union
$S:=\bigsqcup_{i\in I}\tilde L_i$ we define the projection $\tilde p:S\to L$ induced by
$p_i$, $i\in I$. We also define
an equivalence relation on $S$ by defining the equivalence classes as follows: if $s\in S$ is such that
$\tilde p(s)\not\in A$ then $\hat s=\{s\}$. Note that on $\tilde p^{-1}(A)$  we have a map $\zeta:\tilde p^{-1}(A)\to \Z$ 
which is nothing else then the projection on the first component. 
Then if $\tilde p(s)\in A$ and $\zeta(s)=k$ we set
$\hat s=\{(k,x,i):x\in L_i,i\in I,\tilde p(s)=x\}$. We let $\tilde L$ to be the quotient space of $S$ and $p:\tilde L\to L$
the application induced by $\tilde p$. It is not difficult to see that $p$ is a covering and $\tilde L$
is Stein.

\textbf{Step 2.} All irreducible components of $L$ are compact and not rational. In this case we apply the
above procedure to each connected component.

\textbf{Step 3.} The general case. Let $L'$ be the union of all irreducible components of $L$ that 
are compact and not rational and $\{C_j\}_{j\in J}$ the connected components of $L'$. For each $j\in J$
let $p_j:\tilde C_j\to C_j$ the Stein coverings obtained at the first step. Let also $L''$ be the union of the other
irreducible components of $L$. We set $\Gamma_j=L''\cap C_j$. (Note that $\Gamma_j$ might be empty.)
If $\Gamma_j\neq\emptyset$ we consider in $\tilde C_j$ infinitely many disjoint copies $\{\Gamma_j^\alpha:\alpha\in\Z\}$
of $\Gamma_j$. (Each $\Gamma_j^\alpha$ is in bijection via $p_j$ with $\Gamma_j$.) We glue now infinitely many copies $\{L''_\alpha:\alpha\in\Z\}$ of $L''$ to $\tilde C_j$ on $\{\Gamma_j^\alpha:\alpha\in\Z\}$ and we obtain in this way
a holomorphically convex covering of $L$.

\end{proof}

\begin{lem}\label{convlift}
Suppose that $\tilde X$ and $X$ are Hausdorff topological spaces and $p:\tilde X\to X$ is a covering. 
Let 
$\tilde \gamma_n:S^1\to\tilde X$, $\gamma_n:S^1\to X$, $n\geq 1$, 
$\tilde \gamma:S^1\to\tilde X$, $\gamma:S^1\to X$ be 
continuous functions such that $\gamma_n=p\circ\tilde\gamma_n$, $\gamma=p\circ\tilde\gamma$
 and $a\in S^1$ be a fixed point. If $\gamma_n$ converges uniformly to 
$\gamma$ and 
$\tilde \gamma_n(a)$ converges to $\tilde \gamma(a)$ then $\tilde\gamma_n$ converges 
uniformly to $\tilde\gamma$.
\end{lem}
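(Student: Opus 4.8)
The plan is a bookkeeping exercise in elementary covering space theory. Fix metrics inducing the topologies of $X$ and $\tilde X$, written $d$ and $\tilde d$ (all the spaces occurring in our applications are metrizable, and only a compatible uniform structure is really used); it then suffices to show that $\sup_{z\in S^1}\tilde d(\tilde\gamma_n(z),\tilde\gamma(z))\to 0$. Since $p$ is a covering and $\gamma(S^1)$ is compact, I would first cover $\gamma(S^1)$ by finitely many evenly covered open sets $U_1,\dots,U_m$. The preimages $\gamma^{-1}(U_j)$ form an open cover of the compact metric space $S^1$, so by the Lebesgue number lemma there is a $\delta>0$ such that every subarc of $S^1$ of diameter less than $\delta$ is carried by $\gamma$ into some $U_j$. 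Pick points $a=t_0,t_1,\dots,t_N=a$ running once around $S^1$ so that each closed subarc $J_k$ joining $t_{k-1}$ to $t_k$ has diameter less than $\delta$, and fix an index $j(k)$ with $\gamma(J_k)\subset U_{j(k)}$. By the uniform convergence of $\gamma_n$ to $\gamma$ there is an $n_0$ with $\gamma_n(J_k)\subset U_{j(k)}$ for all $k$ and all $n\ge n_0$.

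The heart of the matter is to track, by induction on $k=1,\dots,N$, the sheet of the covering into which $\tilde\gamma_n$ sends $J_k$. Let $\tilde U_k$ be the sheet of $p^{-1}(U_{j(k)})$ (an open set carried homeomorphically onto $U_{j(k)}$ by $p$) that contains $\tilde\gamma(t_{k-1})$; since $\tilde\gamma(J_k)$ is connected, lies in $p^{-1}(U_{j(k)})$, and meets $\tilde U_k$, we have $\tilde\gamma(J_k)\subset\tilde U_k$. I claim there are integers $N_k$ with $\tilde\gamma_n(J_k)\subset\tilde U_k$ whenever $n\ge N_k$. For $k=1$ the hypothesis yields $\tilde\gamma_n(t_0)=\tilde\gamma_n(a)\to\tilde\gamma(a)=\tilde\gamma(t_0)\in\tilde U_1$, so by openness of $\tilde U_1$ one has $\tilde\gamma_n(t_0)\in\tilde U_1$ for $n$ large; then $\tilde\gamma_n(J_1)$ is connected, contained in $p^{-1}(U_{j(1)})$, and meets $\tilde U_1$, hence is contained in it. For the step from $k-1$ to $k$: on $J_{k-1}$ and for $n\ge N_{k-1}$ we have $\tilde\gamma_n=(p|_{\tilde U_{k-1}})^{-1}\circ\gamma_n$, so $\tilde\gamma_n(t_{k-1})\to(p|_{\tilde U_{k-1}})^{-1}(\gamma(t_{k-1}))=\tilde\gamma(t_{k-1})\in\tilde U_k$; openness of $\tilde U_k$ and the same connectedness argument then give $\tilde\gamma_n(J_k)\subset\tilde U_k$ for $n$ large. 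As there are finitely many arcs, set $N^\ast=\max_k N_k$.

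Finally, for $n\ge N^\ast$ and $z\in J_k$ we have $\tilde\gamma_n(z)=q_k(\gamma_n(z))$ and $\tilde\gamma(z)=q_k(\gamma(z))$, where $q_k:=(p|_{\tilde U_k})^{-1}$ is a homeomorphism of $U_{j(k)}$ onto $\tilde U_k$, hence continuous. A merely continuous map need not preserve uniform convergence, but since here $q_k$ is applied to sequences whose common limit $\gamma|_{J_k}$ has compact image, the usual finite-subcover argument repairs this: given $\epsilon>0$, cover the compact set $\gamma(J_k)$ by finitely many balls such that on the concentric balls of twice the radius $q_k$ takes values within $\tilde d$-distance $\epsilon$ of each other, and let $r$ be the least of the radii. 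As soon as $\sup_z d(\gamma_n(z),\gamma(z))<r$, for each $z\in J_k$ both $\gamma(z)$ and $\gamma_n(z)$ lie in one such doubled ball, whence $\tilde d(\tilde\gamma_n(z),\tilde\gamma(z))<\epsilon$. Taking the maximum over the finitely many arcs $J_k$ gives $\sup_{z\in S^1}\tilde d(\tilde\gamma_n(z),\tilde\gamma(z))<\epsilon$ for all sufficiently large $n$, as required.

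The one place I expect to need care is the bookkeeping in the inductive sheet-tracking step --- making sure the finitely many ``for $n$ large enough'' thresholds amalgamate into a single $N^\ast$, and that the distinguished point $a$ really propagates the correct sheet all the way around the circle --- together with the remark, handled by the last compactness argument, that the local inverses $q_k$ are only continuous and not a priori uniformly continuous. Everything else is routine.
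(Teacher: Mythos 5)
Your proof is correct. The engine is the same one the paper uses: on a connected set carried into an evenly covered neighborhood, a lift must land in a single sheet, so agreement of sheets at one point propagates, and the base point $a$ anchors the whole process. What differs is the global bookkeeping. The paper runs a connectedness argument, showing that the set $\Omega$ of points $t$ where $\tilde\gamma_n(t)\to\tilde\gamma(t)$ is nonempty, open and closed in $S^1$ (its openness step is essentially your inductive step), and then upgrades local uniform convergence to global by compactness; you instead subdivide $S^1$ into a finite chain of arcs via a Lebesgue number and induct once around the circle, which produces a single threshold $N^\ast$ directly. You are also more careful than the paper at one point: the paper simply asserts that, since $p$ restricted to a sheet is a homeomorphism, uniform convergence of $\gamma_n$ forces uniform convergence of the lifts, and your doubled-balls compactness argument is exactly the justification that assertion needs. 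The only cosmetic issue is your standing assumption of metrics on $X$ and $\tilde X$: the lemma is stated for Hausdorff spaces, but uniform convergence is meaningless without a uniform structure anyway, and the complex spaces to which the lemma is applied are metrizable, so nothing is lost.
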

\begin{proof}
Let $\Omega=\{t\in S^1:\lim_{n\to\infty}\tilde \gamma_n(t)=\tilde\gamma(t)\}$. We will show first that 
$\Omega=S^1$. As $a\in\Omega$ we have that $\Omega\neq\emptyset$. We prove that $\Omega$ is open in $S^1$.
Let $t_0\in\Omega$ and let $V_0$ a neighborhood of $\gamma(t_0)$ which is evenly covered by $\bigcup\tilde V_k$.
Let $\Omega_0$ a connected open neighborhood of $t_0$ such that for $n$ large enough $\gamma_n(\Omega_0)\subset V_0$
and $\gamma(\Omega_0)\subset V_0$. It follows that $\tilde\gamma_n(\Omega_0)\subset \bigcup \tilde V_k$.
As  $\tilde\gamma_n(\Omega_0)$  and $\tilde\gamma(\Omega_0)$ are connected we have that for each $n$ 
there exists $k_n$ such that $\tilde\gamma_n(\Omega_0)\subset \tilde V_{k_n}$ and there exists $k_0$ 
such that $\tilde\gamma(\Omega_0)\subset \tilde V_{k_0}$. 
However we assumed that $t_0\in\Omega$ and therefore for $n$ large enough $\tilde\gamma_n(t_0)\in  \tilde V_{k_0}$,
hence $V_{k_n}=V_{k_0}$. Since $p:V_{k_0}\to V_0$ is a homeomorphism we conclude that
$\tilde\gamma_n$ converges uniformly to $\tilde\gamma$ on $\Omega_0$. This shows that $\Omega_0\subset \Omega$ and
hence $\Omega$ is open. 

Next we show in a similar fashion that $\Omega$ is closed. Let $t_0\in S^1\setminus \Omega$. As before 
we choose $V_1$ a neighborhood of $\gamma(t_0)$ which is evenly covered by $\bigcup \tilde V_k$,
$\Omega_0$ a connected open neighborhood of $t_0$ such that for $n$ large enough $\gamma_n(\Omega_0)\subset V_0$,
$k_n$ and $k_0$ such that 
$\tilde\gamma_n(\Omega_0)\subset \tilde V_{k_n}$ and $\tilde\gamma(\Omega_0)\subset \tilde V_{k_0}$.
If $V_{k_n}=V_{k_0}$ for every $n$ we would have that  $\tilde\gamma_n$ converges uniformly to $\tilde\gamma$
on $\Omega_0$ hence at $t_0$ and this would contradict $t_0\not\in\Omega$. There exists then a subsequence 
$\tilde\gamma_{n_p}$ such that $\tilde\gamma_{n_p}(\Omega_0)\cap\tilde V_{k_0}=\emptyset$ and from here we get that
$\Omega_0\subset S^1\setminus\Omega$. Hence $\Omega$ is closed and therefore $\Omega=S^1$. 

Note that when we proved that $\Omega$ is open we actually proved that each
$t_0\in \Omega=S^1$ has an open neighborhood $\Omega_0$ such that on this neighborhood
$\tilde\gamma_n$ converges uniformly to $\tilde\gamma$. The compactness of $S^1$ implies then that
$\tilde\gamma_n$ converges uniformly to $\tilde\gamma$ on $S^1$.
\end{proof}

\begin{lem}\label{dischol}
Suppose that $S$ is a complex space that has a holomorphically convex covering 
$p:\tilde S\to S$. If $\Omega\subset \C$ is an open neighborhood of  $\overline\Delta$ and
$f_n:\Omega\to S$ is a sequence of holomorphic functions which converges uniformly on 
$\{z\in\C: |z|=1\}$ then  $\bigcup_n f_n(\overline\Delta)$ is relatively compact in $S$.
(In particular $S$ satisfies the discrete disk property.)
\end{lem}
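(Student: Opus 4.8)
The plan is to push everything up to the holomorphically convex covering $p:\tilde S\to S$, use the maximum principle there, and then project back down. First I would fix $\epsilon>0$ with $\Delta_{1+\epsilon}\subset\Omega$ (possible since $\overline\Delta$ is a compact subset of the open set $\Omega$), so that each $f_n$ is holomorphic on the \emph{simply connected} disk $\Delta_{1+\epsilon}\supset\overline\Delta$. Let $\gamma:S^1\to S$ be the uniform limit of $\gamma_n:=f_n{}_{|S^1}$; it is continuous, being a uniform limit of continuous maps. Each $\gamma_n$ is null-homotopic in $S$ because $f_n{}_{|\overline\Delta}$ is a continuous extension of $\gamma_n$ over the disk. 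Since $S$ is a complex space it is an ENR, so Lemma \ref{CTlema} applies and yields lifts $\tilde\gamma_n$ of $\gamma_n$ and $\tilde\gamma$ of $\gamma$ with respect to $p$ such that $\tilde\gamma_n\to\tilde\gamma$ uniformly; in particular $\tilde\gamma_n(1)\to\tilde\gamma(1)$.

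Next, using that $\Delta_{1+\epsilon}$ is simply connected and $p$ a covering, I would lift $f_n{}_{|\Delta_{1+\epsilon}}$ to a holomorphic map $\tilde f_n:\Delta_{1+\epsilon}\to\tilde S$ (a lift of a holomorphic map through a covering is holomorphic, since $p$ is locally biholomorphic) normalized by $\tilde f_n(1)=\tilde\gamma_n(1)$. Then $\tilde f_n{}_{|S^1}$ and $\tilde\gamma$ are continuous lifts of $\gamma_n$ and $\gamma$, we have $\gamma_n\to\gamma$ uniformly, and $\tilde f_n(1)=\tilde\gamma_n(1)\to\tilde\gamma(1)$, so Lemma \ref{convlift} (with basepoint $a=1$) gives $\tilde f_n{}_{|S^1}\to\tilde\gamma$ uniformly. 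Consequently $C:=\tilde\gamma(S^1)\cup\bigcup_{n\geq1}\tilde f_n(S^1)$ is relatively compact in $\tilde S$, so $h$ will be bounded on it for every holomorphic $h$ on $\tilde S$.

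Now I would show $\bigcup_n\tilde f_n(\overline\Delta)$ is relatively compact in $\tilde S$ by contradiction. Writing $\tilde S=\bigcup_j K_j$ as an increasing union of compact sets (possible since $\tilde S$ is countable at infinity), if $\bigcup_n\tilde f_n(\overline\Delta)$ were not relatively compact then it would not be contained in any $K_j$, so we could choose $x_j\in\tilde f_{n_j}(\overline\Delta)\setminus K_j$, say $x_j=\tilde f_{n_j}(z_j)$ with $z_j\in\overline\Delta$; such a sequence $\{x_j\}$ is discrete and closed in $\tilde S$. Since $\tilde S$ is holomorphically convex there is a holomorphic $h:\tilde S\to\C$ with $|h(x_j)|\to\infty$. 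But $h\circ\tilde f_{n_j}$ is holomorphic on $\Delta_{1+\epsilon}\supset\overline\Delta$, so the maximum principle gives
$$|h(x_j)|=|h(\tilde f_{n_j}(z_j))|\leq\max_{\overline\Delta}|h\circ\tilde f_{n_j}|=\max_{S^1}|h\circ\tilde f_{n_j}|\leq\sup_{\overline C}|h|<\infty,$$
a contradiction. Hence $\bigcup_n\tilde f_n(\overline\Delta)$ is relatively compact, and applying the continuous map $p$ shows $\bigcup_n f_n(\overline\Delta)=p\bigl(\bigcup_n\tilde f_n(\overline\Delta)\bigr)$ is relatively compact in $S$. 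The parenthetical claim is then immediate: given $f_n$ as in the definition of the discrete disk property with $f_n{}_{|S^1}$ converging uniformly, the argument above applies verbatim (the hypothesis on the coronae is not even needed), so $S$ satisfies the discrete disk property.

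The one genuinely delicate point — the rest is routine — is producing lifts $\tilde f_n$ of the $f_n$ \emph{simultaneously} for all $n$ in such a way that the boundary restrictions $\tilde f_n{}_{|S^1}$ still converge uniformly; an arbitrary choice of lifts could scatter the $\tilde f_n$ across unrelated sheets, and then no single holomorphic function $h$ on $\tilde S$ would control them all. This is exactly what the combination of Lemma \ref{CTlema} (which provides boundary lifts $\tilde\gamma_n$ converging uniformly) and Lemma \ref{convlift} (which propagates convergence at one point to all of $S^1$) is designed to handle, so once those are in place the proof goes through as above.
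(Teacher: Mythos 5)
Your proposal is correct and follows essentially the same route as the paper: lift the $f_n$ to the holomorphically convex covering $\tilde S$ so that the boundary restrictions still converge uniformly (the paper cites Lemma \ref{CTlema} for this; you make the normalization explicit via Lemma \ref{convlift}, which is a legitimate way to fill in that step), then use holomorphic convexity plus the maximum principle upstairs and push down by $p$. The extra details you supply (the contradiction argument with a discrete sequence and the observation that the corona hypothesis is not needed) are consistent with what the paper leaves implicit.
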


\begin{proof}
Let $\gamma:\{z\in\C: |z|=1\}\to S$ be the limit of $\{f_n\}$ on $\{z\in\C: |z|=1\}$. 
It follows from Lemma \ref{CTlema} that $\gamma$ is null-homotopic. 
We choose a point $a\in \{z\in\C: |z|=1\}$.
Let $\epsilon$ be a positive number 
such that $ \Delta_{1+\epsilon}\subset \Omega$, $\tilde f_n: \Delta_{1+\epsilon}\to  \tilde S$
and $\tilde\gamma:\{z\in\C: |z|=1\}\to \tilde S$ be liftings of $f_n$ and $\gamma$ respectively.
We choose $\tilde f_n$ and $\tilde\gamma$ such that $\tilde f_n$ converges uniformly to $\tilde \gamma$
on $\{z\in\C: |z|=1\}$. This is possible by 
Lemma \ref{CTlema}. As $\tilde S$ is holomorphically convex this implies that 
$\bigcup_n \tilde f_n(\overline\Delta)$ is relatively compact in $\tilde S$  and therefore 
$\bigcup_n f_n(\overline\Delta)$  is relatively compact in $S$.
\end{proof}

\begin{teo}\label{main}
Let $X$ be a 1-convex surface and $p:\tilde X\to X$ be a covering map. If $\tilde X$ does not contain
an infinite Nori string of rational curves then $\tilde X$ satisfies the discrete disk property.
\end{teo}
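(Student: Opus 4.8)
The plan is to reduce the discrete disk property for $\tilde X$ to the holomorphic convexity of a covering of the exceptional fiber, so that Theorem \ref{CDhol} applies. Let $\pi:X\to Y$ be the 1-convex structure with exceptional set $S=\pi^{-1}(B)$, $B\subset Y$ finite, and consider the composite $\pi\circ p:\tilde X\to Y$. Start with a sequence $f_n:U\to\tilde X$ as in the definition of the discrete disk property: $\Delta_{1+\epsilon}\subset U$, the corona images $\bigcup_n f_n(\Delta_{1+\epsilon}\setminus\Delta)$ lie in a compact $K\subset\tilde X$, and $f_n|_{S^1}\to\gamma$ uniformly. I need $\bigcup_n f_n(\overline\Delta)$ relatively compact. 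Passing to the Stein base via $\pi\circ p$: since $Y$ is Stein, it satisfies the discrete disk property, so $\bigcup_n (\pi\circ p)(f_n(\overline\Delta))$ is relatively compact in $Y$; its closure meets $B$ in at most finitely many points (in fact is contained in a compact set hitting $B$ in finitely many points), so after shrinking to a neighborhood it suffices to handle the situation over a small neighborhood $\Omega$ of a single point $b\in B$ together with the situation away from $B$. Away from $B$, $\pi\circ p$ is a covering of $Y\setminus B$ composed with... more precisely $p$ restricted there is just a covering of the Stein manifold $Y\setminus B$ lifted, and Lemma \ref{dischol} (applied with $\tilde S=\tilde S$ itself, since $Y\setminus B$ is already Stein hence its own holomorphically convex cover, or rather: any covering of a region where things stay in a fixed Stein piece) controls those disks. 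The genuine difficulty is concentrated over the points of $B$.

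So fix $b\in B$ and let $E_b=\pi^{-1}(b)\subset X$, a compact connected curve of dimension $\le 1$; set $\tilde E_b=p^{-1}(E_b)\subset\tilde X$. The key claim is that $\tilde E_b$ is holomorphically convex. By Lemma \ref{holc1}, any $1$-dimensional complex space not containing an infinite Nori string of rational curves has a holomorphically convex covering space; here the hypothesis is that $\tilde X$ — hence the subspace $\tilde E_b$ — contains no infinite Nori string of rational curves. But I need $\tilde E_b$ \emph{itself} to be holomorphically convex, not merely to have a holomorphically convex cover. This is where Proposition \ref{CMH} enters: it furnishes a strongly plurisubharmonic function $\tilde\phi:\tilde X\to[-\infty,\infty)$ with $\{\tilde\phi=-\infty\}=p^{-1}(S)\supset\tilde E_b$, and which is an exhaustion off any neighborhood of $S$. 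The restriction of $\tilde\phi$ to $\tilde E_b$ is $\equiv-\infty$, which is not directly useful; instead I would argue that the components of $\tilde E_b$ that are \emph{non-compact} curves are Stein (a non-compact irreducible curve is Stein), while the compact components cannot form an infinite Nori string (compact components that are rational are excluded by hypothesis, and a string of compact non-rational curves... ), so that $\tilde E_b$ contains no infinite Nori string at all, hence is holomorphically convex by the remark following the definition of infinite Nori string. One must rule out an infinite string of compact non-rational curves inside $\tilde E_b$: such a string, being a non-compact analytic subset of $\tilde X$ lying in $p^{-1}(S)=\{\tilde\phi=-\infty\}$, would be a connected $1$-dimensional set of infinitely many compact components contained in a level set of a strongly plurisubharmonic function, which forces (by the maximum principle applied componentwise, plus a standard argument) that it cannot be closed and non-compact unless it runs off to infinity, contradicting that it sits over the compact $E_b$; alternatively $p:\tilde E_b\to E_b$ is a covering of a holomorphically convex curve, and one checks directly no infinite Nori string can appear. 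This verification is the main obstacle.

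Once $\tilde E_b$ is holomorphically convex, apply Theorem \ref{CDhol} with $T=Y$, $t_0=b$, $X=X$, $\sigma=p$: there is an open neighborhood $\Omega_b$ of $b$ in $Y$ such that $(\pi\circ p)^{-1}(\Omega_b)$ is holomorphically convex. Now do this for each of the finitely many points $b\in B$, and let $\Omega=\bigcup_b\Omega_b$; set $W=(\pi\circ p)^{-1}(\Omega)$, a holomorphically convex open subset of $\tilde X$ containing $p^{-1}(S)$. The discs $f_n(\overline\Delta)$ I would split: the part with $(\pi\circ p)(f_n(z))\notin\Omega$ lies over the Stein manifold $Y\setminus B$ shrunk slightly, where the relevant piece of $\tilde X$ is a covering of a Stein manifold and where the corona hypothesis plus Lemma \ref{convlift} and Lemma \ref{dischol}-type reasoning give relative compactness; the part with $(\pi\circ p)(f_n(z))\in\Omega$, i.e. landing in $W$, is controlled because $W$ is holomorphically convex and one can invoke Lemma \ref{dischol} after noting the boundary values $\gamma$ lift appropriately — using that the corona images already sit in the compact $K$ to pin down which sheets the lifts live on via Lemma \ref{convlift}. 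Assembling: lift $f_n$ and $\gamma$ to $W$ (resp. to the Stein part) using Lemma \ref{CTlema} to keep uniform convergence of the boundary curves, conclude each family $\tilde f_n(\overline\Delta)$ is relatively compact, hence so is $\bigcup_n f_n(\overline\Delta)$ in $\tilde X$. Care is needed that a single disk may straddle $W$ and its complement; I would handle this by first observing that $(\pi\circ p)\circ f_n$ has relatively compact image union in $Y$ (discrete disk property of the Stein space $Y$), so all the disks map into a fixed compact $Q\subset Y$, and then covering $Q$ by the Stein part and the finitely many $\Omega_b$ reduces to a partition-of-the-disk argument exactly as in the continuity-principle proof of Theorem 3.1 of \cite{GF}, with holomorphic convexity supplied locally over each $\Omega_b$ and Steinness over $Q\setminus B$.
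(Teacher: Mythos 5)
There is a genuine gap, and it sits exactly at the point you yourself flag as ``the main obstacle'': the claim that $\tilde E_b=p^{-1}(E_b)$ is holomorphically convex is false in general and cannot be repaired. The hypothesis of the theorem forbids only infinite Nori strings of \emph{rational} curves in $\tilde X$; it does not forbid infinite Nori strings of non-rational compact curves. For instance, if $E_b$ consists of two elliptic curves meeting in two distinct points, the cycle in the dual graph produces a covering $p:\tilde X\to X$ in which $p^{-1}(E_b)$ is an infinite chain of compact elliptic curves --- an infinite Nori string, hence \emph{not} holomorphically convex --- while the hypothesis of the theorem is satisfied. Both of your suggested verifications fail on this example: the entire string lies in the level set $\{\tilde\phi=-\infty\}$ of the function from Proposition \ref{CMH} and is a perfectly good closed, connected, non-compact analytic subset sitting over the compact $E_b$ (no contradiction arises from the maximum principle), and ``a covering of a holomorphically convex curve contains no infinite Nori string'' is precisely the false statement this whole circle of ideas is designed to get around. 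Consequently you cannot apply Theorem \ref{CDhol} with $\sigma=p$. The paper's route is to accept that $\tilde L$ may fail to be holomorphically convex, use Lemma \ref{holc1} to produce a \emph{further} covering $\hat p:\hat L\to\tilde L$ that is holomorphically convex, extend it to a covering of a whole neighborhood of $\tilde L$ via a fiber product with a deformation retraction, and apply Theorem \ref{CDhol} to that composite covering; the conclusion is then only that $(\pi\circ p)^{-1}(U)$ \emph{has a holomorphically convex covering}, which is exactly the weaker property that Lemma \ref{dischol} is formulated to exploit.

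A second, smaller but still real problem is your treatment of the part of the disks lying over $Y\setminus B$. The set $p^{-1}(X\setminus L)$ is a covering of the Stein manifold $Y\setminus B$, and coverings of Stein manifolds are not in general holomorphically convex (nor their own ``holomorphically convex cover''), so neither Lemma \ref{dischol} nor an appeal to Steinness applies there; moreover cutting a single disk along the boundary of $\Omega$ destroys holomorphy of the pieces, so the proposed partition-of-the-disk argument does not assemble. The paper avoids both issues: it first extracts, via the maximum modulus principle in $\C^N$ and Montel, a limit $g$ of $\pi\circ p\circ f_n$ on $\Delta_{1+\epsilon}$, and then splits into three cases according to whether $g\equiv y_0$, $g$ omits $y_0$, or $g^{-1}(y_0)$ is a nonempty discrete set. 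The region away from $y_0$ is controlled not by convexity but by applying the maximum principle to $\tilde\phi\circ f_n$, where $\tilde\phi$ is the plurisubharmonic function of Proposition \ref{CMH}, which is an exhaustion off any neighborhood of $p^{-1}(L)$; only the small disks around the points of $g^{-1}(y_0)$ are handled by Lemmas \ref{CTlema}, \ref{convlift} and \ref{dischol}, and the excision happens along circles $S_j$ interior to the disk (where everything is holomorphic), not along the preimage of $\partial\Omega$.
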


\begin{proof}
Let $L$ be the exceptional set of $X$. Without loss of generality we may assume that $L$ is connected.
Let $Y$ be a normal Stein complex space of dimension 2 and 
$\pi:X\to Y$ a proper surjective holomorphic map such  that $\pi(L)=\{y_0\}$ and 
$\pi:X\setminus L\to Y\setminus\{y_0\}$ is a biholomorphism. 
We may assume that $Y$ is an analytic closed subset of $\C^N$.

We put $\tilde L:=p^{-1}(L)$.
Since we assumed that $\tilde L$ does not contain an infinite Nori string of rational curves
it follows from Lemma \ref{holc1} that $\tilde L$ has a holomorphically convex covering $\hat p:\hat L\to \tilde L$.
We choose $U_1$ an open neighborhood of $L$ in $X$ that has a continuous deformation retract on $L$.
We let $\tilde U_1=p^{-1}(U_1)$ and then $\tilde U_1$ has a continuous deformation retract on $\tilde L$,
$\rho:\tilde U_1\to\tilde L$. By considering the fiber product of $\rho$ and $\hat p$ we obtain
a covering $\hat U_1$ of $\tilde U$ such that the pull-back of $\tilde L$ is $\hat L$.
We apply then Theorem \ref{CDhol} and we deduce that there exists $U\subset Y$ an open neighborhood of 
$y_0$ such that $(\pi\circ p)^{-1}(U)$ has a 
holomorphically convex covering.

Let $f_n:\Delta_{1+\epsilon}\to \tilde X$, $n\geq 1$ be a sequence of holomorphic functions such that 
$\bigcup_{n\geq 1} f_n(\Delta_{1+\epsilon}\setminus \Delta)$ is 
relatively compact in $\tilde X$ and ${f_n}_{|S^1}$ is uniformly convergent. 
We argue by contradiction and we assume that $\cup_{n\geq 1} f_n(\overline\Delta)$ is not relatively 
compact in $\tilde X$  and hence $\cup_{n\geq 1} f_n(\Delta_{1+\epsilon})$ is 
not relatively compact. 
By passing to a subsequence we may assume that $\cup_{k\geq 1} f_{n_k}(\Delta_{1+\epsilon})$  
is not relatively compact 
in $\tilde X$ for every subsequence $\{f_{n_k}\}_k$ of $\{f_n\}_n$.

Since  $\bigcup(\pi\circ p\circ f_n)(\Delta_{1+\epsilon}\setminus\Delta)$ is relatively compact in $Y$, hence in $\C^N$,
by the maximum modulus principle we have that $\bigcup(\pi\circ p\circ f_n)(\Delta_{1+\epsilon})$ 
 is relatively compact in $Y$ and then it follows that
there exists a subsequence of $\{f_n\}_n$, $\{f_{n_k}\}_k$ such   that $\pi\circ p\circ f_{n_k}$ 
converges uniformly on compacts to a 
holomorphic function $g:D_\epsilon\to Y$. Without loss of generality we can assume that 
$\pi\circ p\circ f_n$ converges uniformly on compacts to $g$. We distinguish three cases.

\textbf{Case 1.} $g\equiv y_0$. Then 
$(\pi\circ p\circ f_n)(\overline\Delta_{1+\epsilon_1})\subset U$
for $\epsilon_1\in(0,\epsilon)$ and $n$ large enough.  Therefore we can apply Lemma \ref{dischol}.

\textbf{Case 2.} $g(z)\neq y_0$ for every $z\in\Delta_{1+\epsilon}$. Then 
there exists a 
neighborhood $V$ of $y_0$ and $\epsilon_1\in(0,\epsilon)$ such that 
$g(\overline \Delta_{1+\epsilon_1})\cap \overline V=\emptyset$. 
Then for $n$ large enough we get that  
$\pi\circ p\circ f_n(\overline \Delta_{1+\epsilon_1})\cap \overline V=\emptyset$
and hence $f_n(\overline \Delta_{1+\epsilon_1})\cap (\pi\circ p)^{-1} (V)=\emptyset$.
We consider now a plurisubharmonic function $\tilde \phi$ on $\tilde X$ with the properties  
given in Proposition \ref{CMH}. In particular its restriction to 
$\tilde X\setminus  (\pi\circ p)^{-1} (V)$ is an exhaustion. 
Applying the maximum principle to $\tilde \phi\circ f_n$
we obtain immediately that $\bigcup f_n(\overline \Delta)$ is relatively compact.

\textbf{Case 3.} $g\not\equiv y_0$ and $y_0\in g( \Delta_{1+\epsilon})$. Then $g^{-1}(y_0)$ is a 
(non-empty) discrete subset of 
$\Delta_{1+\epsilon}$ and therefore there exists
$\epsilon_1\in(0,\epsilon)$ such that  $g^{-1}(y_0)\cap \{z\in\C:|z|=1+\epsilon_1\}=\emptyset$. 
We set
 $g^{-1}(y_0)\cap \{z\in\C:|z|<1+\epsilon_1\}=\{\lambda_1,\lambda_2,\dots,\lambda_s\}$. 
Let $r_1,\dots,r_s$ be positive numbers such that the discs 
$\overline \Delta_j=\{z\in\C:|z-\lambda_j|\leq r_j\}$, $j=1,2,\dots, s$ 
are pairwise disjoint, $\overline \Delta_j\subset \Delta_{1+\epsilon_1}$ and 
$g(\overline \Delta_j)\subset U$.
Let $V\subset Y$ be an open neighborhood of $y_0$ such that $\overline V\subset U$ and 
$g(\{z\in  \overline\Delta_{1+\epsilon_1}:|z-\lambda_j|\geq r_j\ \forall j=1,..,s\})\cap\overline V
=\emptyset$. Then for $n$ large enough we have that 
$(\pi\circ p\circ f_n)(\{z\in \overline\Delta_{1+\epsilon_1}:|z-\lambda_j|\geq r_j\ \forall j=1,..,s\})
\cap\overline V=\emptyset$.
We apply again Proposition \ref{CMH} and we obtain a plurisubharmonic function
$\tilde\phi:\tilde X \to [-\infty,\infty)$ such that $\tilde L=\{\tilde \phi=-\infty\}$
$\tilde\phi_{|\tilde X\setminus (\pi\circ p)^{-1}(V)}$ is an exhaustion function
on $\tilde X\setminus  (\pi\circ p)^{-1}(V)$. Since $\bigcup f_n(\Delta_{1+\epsilon}\setminus\Delta)$
is relatively compact there exists a positive constant $M$ such that $\tilde \phi\circ f_n(z)\leq M$
for every $n$ and every $z\in \Delta_{1+\epsilon}\setminus\Delta$. From the plurisubharmonicity of 
$\tilde\phi$ we have that  $\tilde \phi\circ f_n(z)\leq M$ for every  $z\in \Delta_{1+\epsilon}$, 
hence in particular for $z\in\overline\Delta_{1+\epsilon_1}\setminus\bigcup_{j=1}^s\Delta_j$.
As $f_n(\overline\Delta_{1+\epsilon_1}\setminus\bigcup_{j=1}^s\Delta_j)\subset \tilde X\setminus  
(\pi\circ p)^{-1}(V)$ and $\tilde\phi_{|\tilde X\setminus (\pi\circ p)^{-1}(V)}$ is an exhaustion 
we deduce that $\bigcup f_n(\overline\Delta_{1+\epsilon_1}\setminus\bigcup_{j=1}^s\Delta_j)$
is relatively compact in $\tilde X$. 

For $j=1,\dots,s$ we set $S_j=\{z\in\Delta_{1+\epsilon_1}:|z-\lambda_j|=r_j\}$ and we pick a point
$a_j\in S_j$. As $\bigcup _{n\geq 1}f_n(S_j)$ is relatively compact, by passing to a subsequence
we can assume that each sequence $\{f_n(a_j)\}_n$ is convergent and we denote by $x_j$ its limit. 
We have that $(\pi\circ p\circ f_n)_{|S_j}$ converges uniformly to $g_{|S_j}$ and 
$(\pi\circ p\circ f_n)(S_j)\subset Y\setminus\{y_0\}$, $g(S_j)\subset Y\setminus\{y_0\}$. 
Because $\pi:X\setminus L\to  Y\setminus\{y_0\}$ is a biholomorphism we  deduce that
$(p\circ f_n)_{|S_j}$ converges uniformly to $(\pi^{-1}\circ g)_{|S_j}$. Note now that 
$(p\circ f_n)_{|S_j}$ is in fact a null-homotopic loop. It follows from Lemma \ref{CTlema}
that  $(\pi^{-1}\circ g)_{|S_j}$ is null-homotopic as well and therefore there exists a loop
$\gamma_j:S_j\to \tilde X$ such that $p\circ\gamma_j=(\pi^{-1}\circ g)$ on $S_j$.
From Lemma \ref{convlift} we conclude that $f_n$ converges uniformly to $\gamma_j$ on $S_j$.
Finally Lemma \ref{dischol} implies that $\bigcup f_n(\overline\Delta_j)$ is relatively compact for every
$j=1,\dots,s$.
\end{proof}

\ \nn
$\mathbf{Acknowledgments}:$ \* {\it Both authors were partial supported by CNCSIS Grant PN-II ID\_1185, contract 472/2009.
}

\vspace{1.0cm}
\begin{flushleft}
Mihnea Col\c toiu \newline
Institute of Mathematics of the Romanian Academy\newline
P.O. Box 1-764, Bucharest 014700\newline 
ROMANIA\newline
\emph{E-mail address}: Mihnea.Coltoiu@imar.ro

\

\

Cezar Joi\c ta \newline
Institute of Mathematics of the Romanian Academy\newline
P.O. Box 1-764, Bucharest 014700\newline 
ROMANIA\newline
\emph{E-mail address}: Cezar.Joita@imar.ro
\end{flushleft}

\end{document}